\documentclass[11pt, twoside, a4paper]{article}

\usepackage[margin=2.8cm, headheight=14pt]{geometry}
\usepackage{amsmath}
\usepackage{amssymb}
\usepackage{amscd}
\usepackage[all]{xy}
\usepackage{color}
\usepackage{tikz-cd}
\usepackage{amsmath}
\usepackage{amsthm}
\usepackage[nopatch=all]{microtype}
\usepackage{newtxtext, newtxmath}
\usepackage{enumitem}
\usepackage{titlesec}
\usepackage{fancyhdr}
\usepackage{booktabs}
\usepackage{graphicx}
\usepackage{authblk}
\usepackage{mdframed}
\usepackage[svgnames]{xcolor}
\usepackage[colorlinks=true, ocgcolorlinks=true]{hyperref}
\usepackage{indentfirst}
\usepackage{enumitem}

\mathchardef\mhyphen="2D
\newcommand{\ps}[1]{\llbracket #1 \rrbracket}

\definecolor{ScholarlyBlue}{RGB}{20, 60, 110}
\definecolor{CiteRed}{RGB}{139, 0, 0}
\definecolor{LinkBlue}{RGB}{0, 80, 160}

\hypersetup{
    linkcolor=blue,
    citecolor=red,
    urlcolor=blue,
    filecolor=magenta
}

\setlist[enumerate]{font=\upshape}
\setlist[itemize]{font=\upshape}

\newcommand{\papertitle}{On $w$-Hilbert domains}
\newcommand{\headerauthor}{H. Baek} 
\newcommand{\papermsc}{13A15, 13B30}
\newcommand{\paperkeywords}{$w$-Hilbert domain, Hilbert domain, polynomial ring, Anderson ring}

\author{Hyungtae Baek}
\affil{School of Mathematics, Kyungpook National University, Republic of Korea\\ \texttt{htbaek5@gmail.com}}

\title{\vspace{-2cm}\color{ScholarlyBlue}\huge\bfseries \papertitle}

\date{\vspace{-2.5em}}

\newtheoremstyle{colorthm}
  {3pt} 
  {3pt} 
  {\itshape} 
  {} 
  {\color{ScholarlyBlue}\bfseries}
  {.} 
  {.5em}
  {}
\theoremstyle{colorthm}

\newtheorem{theorem}{Theorem}[section]
\newtheorem{lemma}[theorem]{Lemma}
\newtheorem{corollary}[theorem]{Corollary}
\newtheorem{proposition}[theorem]{Proposition}
\newtheorem{example}[theorem]{Example}

\titleformat{\section}
  {\color{ScholarlyBlue}\normalfont\Large\bfseries}{\thesection.}{1em}{}
\titleformat{\subsection}
  {\color{ScholarlyBlue}\normalfont\large\bfseries}{\thesubsection.}{1em}{}

\makeatletter
\renewenvironment{abstract}{%
    \vspace{1em}
    \begin{mdframed}[linewidth=0.8pt, innerleftmargin=1.5em, innerrightmargin=1.5em, innertopmargin=1.1em, innerbottommargin=1.1em, linecolor=ScholarlyBlue, backgroundcolor=ScholarlyBlue!5]
    {\centering \bfseries \abstractname \par}\vspace{0.5em}
    \noindent
}{
    \par\vspace{1.1em}
    \normalfont\small
    \noindent\textbf{2020 Mathematics Subject Classification:} \papermsc \par
    \vspace{0.2em}
    \noindent\textbf{Key words and phrases:
    } \paperkeywords \par
    \end{mdframed}
    \vspace{1.5em}
}
\makeatother

\makeatletter
\newcommand{\autoheaderauthor}{
  \ifnum\value{authors}>1
    \headerauthor\ \textit{et al.}
  \else
    \headerauthor
  \fi
}
\makeatother

\begin{document}

\maketitle
\thispagestyle{empty}

\begin{abstract}
In this paper, we introduce the notion of a $w$-Hilbert domain and investigate its basic properties.
More precisely, we explore its relationship with
Hilbert domains, strong Mori domains, and UMT domains
by providing various examples using $D+M$ constructions.
Furthermore, we establish necessary and sufficient conditions for
the polynomial ring and the Anderson ring to be $w$-Hilbert domains,
and compare the $w$-dimension of the polynomial ring with that of its base ring.
\end{abstract}

\section{Introduction}

The notion of Hilbert rings, also known as Jacobson rings,
plays an important role in commutative algebra and algebraic geometry.
More precisely, the notion of Hilbert rings originates from Hilbert's Nullstellensatz,
which describes the relationship between radical ideals and maximal ideals in the polynomial ring over an algebraically closed field.
Motivated by Hilbert's Nullstellensatz,
Goldman and Krull introduced the notion of a ring in which
every prime ideal is an intersection of maximal ideals
\cite{G 1951, K 1951}.
Goldman referred to such rings as {\it Hilbert rings},
whereas Krull used the term {\it Jacobson rings}.
Following these foundational works, the theory of Hilbert rings was extensively refined by prominent commutative algebraists.
In particular, Kaplansky further developed this approach by introducing
the notions of G-ideals and G-domains to characterize such rings \cite{kaplansky book}.
Gilmer investigated polynomial rings over Hilbert rings,
including those in infinitely many indeterminates,
and obtained several results on when such rings are Hilbert rings \cite{G 1971}.
In \cite{AAM 1985, BH 1980}, the authors investigated the conditions under which
certain quotient extensions of the polynomial ring become Hilbert rings.
In 1992, Anderson, Dobbs and Fontana studied when pullbacks are Hilbert rings.

In this paper, we introduce the notion of $w$-Hilbert domains
motivated by the above studies, and
investigate their examples and properties.
Since every prime ideal of a Hilbert ring can be represented by maximal ideals,
maximal ideals play a central role in the notion of Hilbert rings.
Motivated by this observation,
it is natural to define a new notion combining the $w$-operation and the notion of Hilbert rings using maximal $w$-ideals as follows:
an integral domain is called a {\it $w$-Hilbert domain} if
every prime $w$-ideal can be expressed as an intersection of maximal $w$-ideals.

This paper consists of three sections including introduction.
In Section \ref{sec 2},
we investigate basic properties and some examples of $w$-Hilbert domains.
More precisely, we first show that every $w$-G-ideal of a $w$-Hilbert domain is a maximal $w$-ideal (Proposition \ref{w-G-ideal is maximal}),
and we also prove that
in a strong Mori domain,
the quotient extension of a $w$-Hilbert domain is a $w$-Hilbert domain
(Proposition \ref{quotient extension}).
We also derive some examples of $w$-Hilbert domains using $D+M$ constructions.
In more detail,
we examine an example which shows that a Hilbert ring does not imply a $w$-Hilbert domain
(Example \ref{non w-Hil Hil}), and
an example which shows that a UMT domain and a $w$-Hilbert domain do not imply each other (Example \ref{UMT and w-Hil}).
In Section \ref{sec 3},
we examine the conditions under which
the polynomial ring and the Anderson ring become $w$-Hilbert domains;
we show that the polynomial ring (respectively, the Anderson ring) is a $w$-Hilbert domain if and only if
its base ring is a $w$-Hilbert domain and
every prime $w$-ideal of the polynomial ring (respectively, the Anderson ring) is either
the extension of a prime $w$-ideal of the base ring or
an upper to zero maximal $w$-ideal of the polynomial ring
(respectively, the extension of such an ideal disjoint from $A$) (Theorem \ref{main of section 3}).
We then investigate a framework to construct $w$-Hilbert domains that are neither Hilbert domains, DW-domains, nor DVRs (Example \ref{w-Hilbert nor DVR}).
At the end of this paper,
we show that
$w\mhyphen\dim(D) = w\mhyphen\dim(D[X])$ when $D[X]$ is a $w$-Hilbert domain (Corollary \ref{w-dim over w-Hilbert}).

To help readers better understand this paper,
we review the definitions of the $w$-operation and
some integral domains related to the $w$-operation.
In this paper, $D$ always denotes an integral domain with quotient field $K$.
Let ${\bf F}(D)$ be the set of nonzero fractional ideals of $D$.
For $I \in {\bf F}(D)$,
set $I^{-1} := \{a \in K \,|\, aI \subseteq D\}$.
An ideal $J$ of $D$ is called a {\it Glaz-Vasconcelos ideal} (for short, {\it GV-ideal}),
and denoted by $J \in {\rm GV}(D)$,
if $J$ is finitely generated and $J^{-1} = D$.
For each $I \in {\bf F}(D)$,
the {\it $w$-envelope} of $I$ is the set
$I_w : = \{a \in K \,|\, aJ \subseteq I \text{ for some $J \in {\rm GV}(D)$}\}$.
The mapping on ${\bf F}(D)$ defined by $I \mapsto I_w$ is called the {\it $w$-operation} on $D$.
An ideal $I$ of $D$ is called a {\it $w$-ideal} if
$I_w = I$, and
a proper $w$-ideal $I$ of $D$ is a {\it maximal $w$-ideal} if there is no proper $w$-ideal of $D$ containing $I$.
By Zorn's lemma, it is easy to show that $D$ has at least one maximal $w$-ideal when $D$ is not a field.
An ideal $P$ of $D$ is called a {\it prime $w$-ideal} if it is both a prime ideal and a $w$-ideal.
It is routine to check that if a prime ideal $P$ of $D$ is contained in a proper $w$-ideal of $D$,
then it is a prime $w$-ideal of $D$.
Hence we can define the {\it $w$-dimension} of $D$ as $w\mhyphen\dim(D) = \sup\{{\rm ht}(M) \,|\, M \in w\mhyphen{\rm Max}(D)\}$.
At the end of this section, we state some definitions of integral domains or ideals related to the $w$-operation
used in this paper.
\begin{itemize}
\item
$D$ is a {\it DW-domain} if
every ideal of $D$ is a $w$-ideal.
\item
$D$ is a {\it UMT domain} if
every upper to zero in $D[X]$ is a maximal $w$-ideal of $D[X]$.
\item
A nonzero ideal $I$ of $D$ is {\it $w$-invertible} if
$(II^{-1})_w = D$.
\item
$D$ is a {\it Pr\"ufer $v$-multiplication domain}
(for short, {\it P$v$MD}) if
every finitely generated nonzero ideal of $D$ is $w$-invertible.
\item
$D$ is a {\it Krull domain} if
every nonzero ideal of $D$ is $w$-invertible.
\item
$D$ is a {\it strong Mori domain} if
it satisfies the ascending chain condition on $w$-ideals.
\item
$D$ has {\it finite $w$-character} if
every nonzero nonunit element belongs to only finitely many maximal $w$-ideals.
\end{itemize}
Note that a Krull domain is both a strong Mori domain and a P$v$MD,
and every strong Mori domain is a $w$-locally Noetherian domain with finite $w$-character.
Also, a UMT domain is exactly an integrally closed P$v$MD.
It is worth noting that
if $D$ is a Krull domain, then $w\mhyphen\dim(D) = 1$ \cite[Corollary 7.9.4(1)]{wang book}.

\section{Basic properties and examples of \texorpdfstring{$w$}{w}-Hilbert domains}\label{sec 2}

In this section,
we define the notion of $w$-Hilbert domains,
and investigate basic properties of $w$-Hilbert domains.
Also, we examine some examples of $w$-Hilbert domains.
Recall that an integral domain is a {\it Hilbert domain}
if every prime ideal is an intersection of maximal ideals.
Similarly, we can define the $w$-analogue of the Hilbert domain.
An integral domain is said to be a {\it $w$-Hilbert domain}
if every prime $w$-ideal can be expressed as
an intersection of maximal $w$-ideals.
We first investigate basic examples of $w$-Hilbert domains.

\begin{example}\label{w-Hilbert exam}
{\rm

(1) Every DVR is a $w$-Hilbert domain which is not a Hilbert domain.

(2) Every one-$w$-dimensional integral domain is a $w$-Hilbert domain.
Since every Krull domain is one-$w$-dimensional,
a Krull domain is always a $w$-Hilbert domain.

(3) Let $D$ be a DW-domain.
If $D$ is a Hilbert domain,
then every prime $w$-ideal of $D$ is
an intersection of maximal ideals of $D$,
so it is an intersection of maximal $w$-ideals of $D$.
Hence every Hilbert domain is a $w$-Hilbert domain
when it is a DW-domain.
On the other hand, every DVR is a DW-domain.
The first argument demonstrates that the concepts of a Hilbert domain and a $w$-Hilbert domain do not coincide
even in the case of DW-domains.

(4) Every prime $w$-ideal of a Hilbert domain with finite character is a maximal ideal.
This implies that every Hilbert domain with finite character is a $w$-Hilbert domain.

(5) For $n \geq 2$,
let $V$ be a rank $n$ valuation domain with maximal ideal $M$.
Then there exists a chain $P \subsetneq M$ of
prime ideals of $V$.
Since every valuation domain is a DW-domain,
$P$ is a prime $w$-ideal.
However, $P \neq M$,
which means that $V$ is not a $w$-Hilbert domain.
}
\end{example}

It is a well-known fact that a commutative ring with identity is a Hilbert ring if and only if 
the contraction of every maximal ideal in its polynomial ring is also a maximal ideal. 
The following example shows that the above fact for maximal $w$-ideals does not hold in $w$-Hilbert domains.

\begin{example}
{\rm
Let $V$ be a DVR with maximal ideal $M$.
Then $V$ is a $w$-Hilbert domain by Example \ref{w-Hilbert exam}(1).
Also, it is easy to show that $XV[X]$ is a maximal $w$-ideal of $V[X]$.
However, $(0) = XV[X] \cap V$ is not a maximal $w$-ideal of $V$.
Thus the contraction of a maximal $w$-ideal of the polynomial ring over a $w$-Hilbert domain is not necessarily a maximal $w$-ideal.
}
\end{example}

An ideal of an integral domain is a {\it $w$-G-ideal} if
it is both a G-ideal and a $w$-ideal.
It is a trivial fact that every maximal ideal is a G-ideal.
However, the following example highlights a striking difference in $w$-operation theory:
A maximal $w$-ideal is not necessarily a $w$-G-ideal.

\begin{example}
{\rm
Let $k$ be a field and
let $X,Y$ be indeterminates over $k$.
Consider the ring $D = k[X,Y]$.
Then $D$ is a Krull domain,
and hence it is a $w$-Hilbert domain.
Since $(Y)$ is a height-one prime ideal of $D$,
$(Y)$ is a maximal $w$-ideal of $D$.
However, $D/(Y) = k[X]$ is not a G-domain,
which means that $(Y)$ is not a $w$-G-ideal.
}
\end{example}

From now on,
we investigate basic properties of $w$-Hilbert domains.
First, we investigate the relationship between $w$-G-ideals and maximal $w$-ideals in a $w$-Hilbert domain.

\begin{proposition}\label{w-G-ideal is maximal}
Let $D$ be a $w$-Hilbert domain.
Then every $w$-G-ideal of $D$ is a maximal $w$-ideal of $D$.
\end{proposition}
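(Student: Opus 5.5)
Let $P$ be a $w$-G-ideal of $D$, so $P$ is a prime $w$-ideal and $D/P$ is a G-domain. The plan is to combine the $w$-Hilbert hypothesis with the standard characterization of G-ideals: a prime $P$ is a G-ideal exactly when the intersection of all primes of $D$ properly containing $P$ strictly contains $P$. Equivalently, in the G-domain $D/P$ the intersection of all nonzero primes is nonzero (Kaplansky).

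First observe that $P$ is proper and nonzero, and hence is contained in some maximal $w$-ideal. Here we may assume $D$ is not a field. If $P = 0$, then $D$ itself is a G-domain, and the argument below still applies.

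Next apply the $w$-Hilbert property to the prime $w$-ideal $P$. This gives $P = \bigcap_{\lambda} M_\lambda$ for some family of maximal $w$-ideals $M_\lambda$, each containing $P$. Suppose, for contradiction, that $P$ is not a maximal $w$-ideal. Then no $M_\lambda$ equals $P$. Indeed, if some $M_\lambda = P$, then $P$ would be a maximal $w$-ideal. So every $M_\lambda$ properly contains $P$.

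Each $M_\lambda$ is a prime ideal properly containing $P$. Hence the intersection of all primes properly containing $P$ is contained in $\bigcap_\lambda M_\lambda = P$, so it equals $P$. This contradicts the fact that $P$ is a G-ideal. Therefore $P$ is a maximal $w$-ideal.

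The only point requiring care is the G-ideal criterion. Passing to $D/P$, the images $M_\lambda/P$ are nonzero primes whose intersection is $0$. But in a G-domain, the intersection of all nonzero primes is nonzero, which is the standard equivalent formulation of being a G-domain. I expect no real obstacle beyond citing this characterization. One must only note that the $M_\lambda$ are genuine prime ideals: a maximal $w$-ideal is prime, a standard fact of $w$-operation theory.
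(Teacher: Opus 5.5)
Your proof is correct and is essentially the paper's argument. You write $P=\bigcap_{\alpha}M_\alpha$ with maximal $w$-ideals, note that if $P$ is not a maximal $w$-ideal then each $M_\alpha/P$ is a nonzero prime of the G-domain $D/P$, and conclude from the fact that a G-domain has nonzero intersection of its nonzero primes, exactly as the paper does; your preliminary remark that $P$ lies in some maximal $w$-ideal is not needed, and the $P=0$ aside is harmless.
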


\begin{proof}
Suppose that $D$ is a $w$-Hilbert domain.
Let $P$ be a $w$-G-ideal of $D$.
Then there exists a subset $\{M_{\alpha} \,|\, \alpha \in \Lambda\}$ of $w\mhyphen{\rm Max}(D)$ such that
$P = \bigcap_{\alpha \in \Lambda}M_{\alpha}$.
Suppose to the contrary that $P$ is not a maximal $w$-ideal of $D$.
Then $M_{\alpha}/P$ is a nonzero prime ideal of $D/P$ for any $\alpha \in \Lambda$.
As $D/P$ is a G-domain,
there exists a nonzero element $a+P \in (D/P) \setminus\{0+P\}$ such that
$a+P \in M_{\alpha}/P$ for all $\alpha \in \Lambda$.
Hence $a+P \in \bigcap_{\alpha \in \Lambda}(M_{\alpha}/P) = (\bigcap_{\alpha \in \Lambda}M_{\alpha})/P = \{0+P\}$.
This is a contradiction,
so $P$ is a maximal $w$-ideal of $D$.
\end{proof}

The following result concerns the interplay between
the $w$-spectrum and the maximal $w$-spectrum in a $w$-Hilbert domain.

\begin{proposition}\label{infinite maximal $w$-ideal}
Let $D$ be a $w$-Hilbert domain.
If there exists a prime $w$-ideal of $D$ which is not a maximal $w$-ideal of $D$,
then $D$ has an infinite number of maximal $w$-ideals.
\end{proposition}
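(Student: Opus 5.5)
We argue by contraposition, using prime avoidance. Suppose $D$ is a $w$-Hilbert domain with only finitely many maximal $w$-ideals $M_1,\dots,M_n$. Let $P$ be a prime $w$-ideal of $D$ that is not a maximal $w$-ideal. We aim to show that no such $P$ exists.

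By the $w$-Hilbert property, there is a nonempty subset $\{M_{i_1},\dots,M_{i_k}\}$ of $\{M_1,\dots,M_n\}$ with
\[
P=\bigcap_{j=1}^{k} M_{i_j}.
\]
The index set cannot be empty, since the empty intersection would be $D$ and $P$ is proper.

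Since the intersection is finite, we have
\[
M_{i_1}M_{i_2}\cdots M_{i_k}\subseteq \bigcap_{j=1}^{k} M_{i_j}=P.
\]
Because $P$ is prime, $M_{i_j}\subseteq P$ for some $j$. On the other hand, $P\subseteq M_{i_j}$ because $P$ is the intersection. Hence $P=M_{i_j}$ is a maximal $w$-ideal, which contradicts the choice of $P$.

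Therefore, whenever $D$ has a prime $w$-ideal that is not a maximal $w$-ideal, $w\mhyphen{\rm Max}(D)$ must be infinite. More precisely, the argument shows that the family $\{M_\alpha\}$ representing such a $P$ is itself infinite.

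There is no real obstacle here. The only points needing care are that the representing family is nonempty, since $P$ is proper, and that the finite-intersection step uses only the primeness of $P$ and no $w$-theoretic input. This parallels the classical fact that a semilocal Hilbert domain is zero-dimensional.
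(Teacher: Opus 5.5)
Your proof is correct and is essentially the paper's argument. The paper picks $a_i \in M_i \setminus P$ and notes that $a_1\cdots a_k \in \bigcap_i M_i = P$, which contradicts primeness; this is just the element-wise version of your observation that $M_{i_1}\cdots M_{i_k} \subseteq P$ forces $P = M_{i_j}$ for some $j$.
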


\begin{proof}
Let $P$ be a prime $w$-ideal of $D$ which is not a maximal $w$-ideal of $D$.
Suppose to the contrary that $D$ has only finitely many maximal $w$-ideals,
say $M_1,\dots,M_n$.
Since $D$ is a $w$-Hilbert domain,
we may assume that there exists $1\leq k \leq n$ such that
$P = \bigcap_{1 \leq i \leq k} M_i$.
For each $1 \leq i \leq k$,
let $a_i \in M_i \setminus P$.
Then $a_1\cdots a_k + P \in (\bigcap_{1 \leq i \leq k}M_i)/P = \{0+P\}$,
so $a_i \in P$ for some $1 \leq i \leq k$,
a contradiction.
Thus $D$ has infinitely many maximal $w$-ideals.
\end{proof}

The next example shows that the converse of Proposition \ref{infinite maximal $w$-ideal} does not hold in general.

\begin{example}
{\rm
Note that $w\mhyphen\dim(\mathbb{Z}) = 1$, so
every prime $w$-ideal of $\mathbb{Z}$ is a maximal $w$-ideal and
$\mathbb{Z}$ is a $w$-Hilbert domain by Example \ref{w-Hilbert exam}.
Also, since $\mathbb{Z}$ is a DW-domain,
$\mathbb{Z}$ has an infinite number of maximal $w$-ideals.
Thus the converse of Proposition \ref{infinite maximal $w$-ideal} does not hold in general.
}
\end{example}

Next, we investigate $w$-Hilbert domains in the context of strong Mori domains.
Let $D$ be an integral domain and
let $S$ be a multiplicative subset of $D$.
Suppose that $D$ is a strong Mori domain.
Then it is routine to check that
if $P$ is a prime $w$-ideal of $D$ disjoint from $S$,
then $PD_S$ is a prime $w$-ideal of $D_S$.
This implies that if $M$ is a maximal $w$-ideal of $D$,
then $MD_M$ is the unique maximal $w$-ideal of $D_M$.
Using the above facts,
we obtain the following result.

\begin{proposition}\label{quotient extension}
Let $D$ be a strong Mori domain and
let $S$ be a multiplicative subset of $D$.
If $D$ is a $w$-Hilbert domain,
then so is $D_S$.
\end{proposition}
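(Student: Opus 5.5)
Let $Q$ be a prime $w$-ideal of $D_S$. Then $P := Q \cap D$ is a prime ideal of $D$ disjoint from $S$ with $Q = PD_S$. The plan is as follows.

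First, show that $P$ is a prime $w$-ideal of $D$.

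Next, use the $w$-Hilbert property of $D$ to write $P = \bigcap_{\alpha} M_\alpha$ with $M_\alpha \in w\mhyphen{\rm Max}(D)$. For an ideal $M_\alpha$ meeting $S$ we have $M_\alpha D_S = D_S$, so we discard it.

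Finally, show that $Q$ is an intersection of maximal $w$-ideals of $D_S$ built from those $M_\alpha$ that miss $S$.

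For the first step, we want $P_w = P$. Take $a \in P_w$, so $aJ \subseteq P$ for some $J \in {\rm GV}(D)$. Then $aJD_S \subseteq Q$. In a strong Mori domain (indeed for any domain), $JD_S$ is finitely generated and its $w$-envelope in $D_S$ is $D_S$, i.e.\ $(JD_S)_w = D_S$. I will cite the standard fact that $w$-ideals of $D_S$ contract to $w$-ideals of $D$, or verify this via $(JD_S)^{-1} \supseteq J^{-1}D_S$ together with the fact that $\frac{a}{1}$ lies in $Q_w = Q$. It then follows that $a \in Q \cap D = P$.

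The main obstacle is that $P = \bigcap_\alpha M_\alpha$ may involve maximal $w$-ideals meeting $S$, so discarding them may lose information. I would avoid this by a direct argument on elements rather than on the given representation. Let $x \in D_S \setminus Q$; without loss of generality $x = a \in D \setminus P$. Write $P = \bigcap_\alpha M_\alpha$; since $a \notin P$, some $M_\alpha \supseteq P$ with $a \notin M_\alpha$ exists, but it may meet $S$. So I instead work inside the quotient $D/P$ and use that for a strong Mori domain each maximal $w$-ideal has finite height. More concretely, I would try to prove that
\[ \bigcap\{N \in w\mhyphen{\rm Max}(D) : P \subseteq N,\ N \cap S = \emptyset\} = P. \]
If this failed, $P$ would be a $w$-G-ideal relative to $S$. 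I would then use the $w$-Hilbert property applied to the prime $w$-ideals between $P$ and the maximal ones avoiding $S$, together with the ACC on $w$-ideals, to choose primes $P' \supsetneq P$ maximal among prime $w$-ideals disjoint from $S$. Each such $P'$ extends to a maximal $w$-ideal of $D_S$, and the ACC together with finite $w$-character should show that these $P'$ intersect to $P$. This is where the strong Mori hypothesis is essential.

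Once this is established, each $N$ with $N \cap S = \emptyset$ gives $ND_S$, which is a prime $w$-ideal of $D_S$ by the fact recalled before the proposition. It is moreover maximal among $w$-ideals of $D_S$, because any prime $w$-ideal of $D_S$ containing it contracts to a prime $w$-ideal of $D$ containing $N$ and disjoint from $S$, and such a prime must equal $N$. Then $Q = PD_S = \bigcap N D_S$, since an element $\frac{a}{s}$ in every $ND_S$ has $a \in \bigcap N = P$. Hence $D_S$ is a $w$-Hilbert domain.
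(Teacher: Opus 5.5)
There is a genuine gap, and it sits at the step you yourself call the main obstacle. Your first step, contracting $Q$ to a prime $w$-ideal $P=Q\cap D$, is fine. One small slip: to get $(JD_S)^{-1}=D_S$ you need $(JD_S)^{-1}\subseteq J^{-1}D_S$, which holds because $J$ is finitely generated. The inclusion $\supseteq$ that you wrote does not give it.

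The central step is where the proposal fails. You correctly see that discarding the $M_\alpha$ meeting $S$ might lose information, but your resolution is not an argument. The passage to a ``relative $w$-G-ideal'', the ACC choice of primes $P'$ maximal among prime $w$-ideals disjoint from $S$, and the assertion that ``the ACC together with finite $w$-character should show that these $P'$ intersect to $P$'' only state what you hope is true. Your last paragraph also switches to maximal $w$-ideals $N$ of $D$ avoiding $S$. The $P'$ you built need not be maximal $w$-ideals of $D$, so the two halves of the plan do not match.

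The missing idea is a one-line use of finite $w$-character. Since $P\neq 0$, fix $0\neq p\in P$. Every $M_\alpha$ in $P=\bigcap_\alpha M_\alpha$ contains $p$, so only finitely many $M_\alpha$ occur. Localization commutes with finite intersections, so $PD_S=\bigcap_\alpha M_\alpha D_S$. The $M_\alpha$ meeting $S$ contribute $D_S$ and can be dropped with no loss. This is exactly the paper's proof: its ``second equality follows from finite $w$-character''.

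You can go even further. A prime that is a finite intersection of primes contains their product, so it equals one of them. Hence $P=M_\alpha$ for some $\alpha$, necessarily with $M_\alpha\cap S=\emptyset$, and $Q=M_\alpha D_S$ is itself a maximal $w$-ideal of $D_S$. With this observation your displayed identity $\bigcap\{N\}=P$ is trivial (the family is just $\{P\}$), and the ACC and G-ideal machinery is unnecessary.
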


\begin{proof}
Suppose that $D$ is a $w$-Hilbert domain.
Let $\mathfrak{p}$ be a prime $w$-ideal of $D_S$.
Then there exists a prime $w$-ideal $P$ of $D$ such that
$\mathfrak{p} = PD_S$.
Hence
there exists a subset $\{M_{\alpha} \,|\, \alpha \in \Lambda\}$ of $w\mhyphen{\rm Max}(D)$ such that
$P = \bigcap_{\alpha \in \Lambda} M_{\alpha}$.
Consider the set
$\Gamma := \{\alpha \in \Lambda \,|\,
M_{\alpha} \cap S = \emptyset\}$.
Then $\mathfrak{p} = PD_S
= \bigcap_{\alpha \in \Lambda}M_{\alpha}D_S
= \bigcap_{\alpha \in \Gamma}M_{\alpha}D_S$,
where the second equality follows from the fact that $D$ has finite $w$-character.
Thus $D_S$ is a $w$-Hilbert domain
since $M_{\alpha}D_S$ is a maximal $w$-ideal of $D_S$ for any $\alpha \in \Gamma$.
\end{proof}

Several characterizations of the condition $w\mhyphen\dim(D)=1$ have been established.
For instance, $D$ is a UMT domain if and only if
$w\mhyphen\dim(D)=1$ when $D$ is a strong Mori domain,
and $D$ is a PIT domain if and only if $w\mhyphen\dim(D)=1$ when $D$ is a P$v$MD.
Recall that $D$ has {\it PIT} if
$D$ satisfies the principal ideal theorem, {\it i.e.},
every minimal prime ideal of $D$ over
a nonzero nonunit element has at most height one
\cite[Exercise 7.46]{wang book}.
By Proposition \ref{quotient extension}, we obtain another characterization of $w\mhyphen\dim(D)=1$
when $D$ is a strong Mori domain.

\begin{theorem}\label{w-Hil w-dim 1}
Let $D$ be a strong Mori domain.
Then $D$ is a $w$-Hilbert domain if and only if
$w\mhyphen\dim(D) = 1$.
\end{theorem}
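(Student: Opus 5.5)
The reverse implication is immediate from Example \ref{w-Hilbert exam}(2). If $w\mhyphen\dim(D)=1$, then every nonzero prime $w$-ideal is a maximal $w$-ideal, so it is trivially an intersection of maximal $w$-ideals. The zero ideal is not a $w$-ideal of interest here unless $D$ is a field; in any case it is covered by the same example. So the substance is the forward direction, and I would argue it by contradiction via localization, using Proposition \ref{quotient extension} and Proposition \ref{infinite maximal $w$-ideal}.

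Assume $D$ is a strong Mori $w$-Hilbert domain with $w\mhyphen\dim(D)\geq 2$. Then there is a maximal $w$-ideal $M$ of $D$ and a nonzero prime $P\subsetneq M$. Since $P$ lies in the proper $w$-ideal $M$, $P$ is a prime $w$-ideal. Take $S=D\setminus M$. By Proposition \ref{quotient extension}, $D_M$ is again a $w$-Hilbert domain.

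By the remark preceding Proposition \ref{quotient extension}, $MD_M$ is the unique maximal $w$-ideal of $D_M$, and $PD_M$ is a prime $w$-ideal of $D_M$. Moreover $PD_M\neq MD_M$, since $P\subsetneq M$ and both are primes contracting back from $D_M$. Thus $D_M$ is a $w$-Hilbert domain with only one maximal $w$-ideal but with a prime $w$-ideal that is not maximal. This contradicts Proposition \ref{infinite maximal $w$-ideal}. One can also see the contradiction directly: $PD_M$ would have to equal an intersection of copies of $MD_M$, that is, $MD_M$ itself. Hence $w\mhyphen\dim(D)=1$.

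The main point to check is that $D_M$ is not a field and that $PD_M$ is genuinely a nonzero prime $w$-ideal properly contained in $MD_M$. Nonzeroness and properness follow from the prime correspondence under localization. That it is a $w$-ideal is exactly the "routine" fact for strong Mori domains quoted before Proposition \ref{quotient extension}, which I would take as given.

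A second subtlety is whether the $w$-Hilbert property is meant to apply to the zero prime. If the definition is read to include $(0)$, then in a domain with a unique maximal $w$-ideal, $(0)$ itself would already give a problem. To avoid this, I would state explicitly that only nonzero prime $w$-ideals are considered, consistently with Example \ref{w-Hilbert exam}(1), where DVRs are declared $w$-Hilbert.
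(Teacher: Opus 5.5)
Your proof is correct and follows essentially the same route as the paper. Both localize at a maximal $w$-ideal $M$ containing the prime $w$-ideal $P$, use Proposition \ref{quotient extension} to see that $D_M$ is a $w$-Hilbert domain whose only maximal $w$-ideal is $MD_M$, and force $PD_M=MD_M$, so $P=M$; the paper argues this directly, exactly as in your ``direct'' remark, rather than by contradiction. Your point that only nonzero prime $w$-ideals can be meant matches the paper's implicit convention, and like the paper you tacitly assume $D$ is not a field to get $w\mhyphen\dim(D)=1$ rather than just $\le 1$.
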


\begin{proof}
Suppose that $D$ is a $w$-Hilbert domain and
let $P$ be a prime $w$-ideal of $D$.
Then there is a maximal $w$-ideal $M$ of $D$ containing $P$.
By Proposition \ref{quotient extension},
$D_M$ is a $w$-Hilbert domain with the unique maximal $w$-ideal $MD_M$.
Since $PD_M$ is a prime $w$-ideal of $D_M$,
$PD_M = MD_M$, and hence $P = M$.
This implies that every prime $w$-ideal of $D$ is a maximal $w$-ideal.
Thus $w\mhyphen\dim(D) = 1$.
The converse is obvious.
\end{proof}

By Theorem \ref{w-Hil w-dim 1}, we obtain

\begin{corollary}\label{w-Hilbert UMT}
Let $D$ be a strong Mori domain.
Then $D$ is a $w$-Hilbert domain if and only if $D$ is a UMT domain.
\end{corollary}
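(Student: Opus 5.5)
The plan is to combine Theorem \ref{w-Hil w-dim 1} with the characterization recalled just before it: for a strong Mori domain $D$, $D$ is a UMT domain if and only if $w\mhyphen\dim(D)=1$. Granting both, the corollary follows by chaining the two equivalences. For a strong Mori domain $D$, Theorem \ref{w-Hil w-dim 1} gives that $D$ is a $w$-Hilbert domain if and only if $w\mhyphen\dim(D)=1$. The quoted fact gives that $w\mhyphen\dim(D)=1$ if and only if $D$ is a UMT domain. No further work is needed once these two equivalences are in hand.

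Since the UMT characterization is only quoted in the text, I would at least sketch its justification, in two directions.

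First, suppose $D$ is strong Mori and UMT. By the remark in the introduction, $D$ is then an integrally closed P$v$MD. A strong Mori P$v$MD is a Krull domain: finitely generated ideals are $w$-invertible, and every $w$-ideal is of finite type, so every nonzero ideal is $w$-invertible. Hence $w\mhyphen\dim(D)=1$ by \cite[Corollary 7.9.4(1)]{wang book}.

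Second, suppose $D$ is strong Mori with $w\mhyphen\dim(D)=1$. Take an upper to zero $Q$ of $D[X]$. We must show that $Q$ is a maximal $w$-ideal. Two facts are needed:
\begin{itemize}
\item $D[X]$ is again strong Mori.
\item Any maximal $w$-ideal $N$ of $D[X]$ containing $Q$ has nonzero contraction $N\cap D$. In fact $N\cap D$ is a prime $w$-ideal, hence a maximal $w$-ideal $M$ of $D$ by the dimension hypothesis, and $N=M[X]$.
\end{itemize}
Since $Q\subseteq M[X]$ would force $c(f)\subseteq M$ for $f\in Q$, one argues that $Q$ cannot lie in such an $N$ unless $Q$ is already $w$-maximal. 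This uses that uppers to zero in $D[X]$ with $c(f)_w=D$ are $w$-maximal.

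The main obstacle is this second direction. I expect to rely on the literature result (for strong Mori domains, UMT is equivalent to $w$-dimension one, as cited just before Theorem \ref{w-Hil w-dim 1}) rather than reprove it. The corollary is then an immediate consequence of Theorem \ref{w-Hil w-dim 1}.
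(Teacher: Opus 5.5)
Your main line, chaining Theorem~\ref{w-Hil w-dim 1} with the quoted fact that a strong Mori domain is UMT if and only if $w\mhyphen\dim(D)=1$, is exactly how the paper gets the corollary, and on its own it is correct. The supplementary sketch of the quoted fact, however, should be dropped or repaired.

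\textbf{First direction.} You use ``UMT $\Rightarrow$ integrally closed P$v$MD'', taken from the introduction. That sentence has the roles reversed. The correct statement, which the paper itself uses in Example~\ref{UMT and w-Hil}(2), is that a P$v$MD is exactly an integrally closed UMT domain. Your chain would prove that every strong Mori UMT domain is Krull, and this is false. For a field $k$, the ring $k[t^2,t^3]$ is Noetherian and one-dimensional, so it has $w$-dimension one and is therefore UMT, yet it is not integrally closed.

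A correct route is as follows. Take $M\in w\mhyphen{\rm Max}(D)$. Then $D_M$ is Noetherian, since $D$ is $w$-locally Noetherian. UMT forces $D_M$ to have Pr\"ufer integral closure, because a UMT domain is quasi-Pr\"ufer at each maximal $t$-ideal, and maximal $t$- and $w$-ideals coincide. By Mori--Nagata this integral closure is also Krull, hence Dedekind. Therefore ${\rm ht}(M)=\dim D_M=1$.

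\textbf{Second direction.} Here the decisive step is only asserted (``one argues that $Q$ cannot lie in such an $N$''). Knowing that an upper to zero containing some $f$ with $c(f)_w=D$ is $w$-maximal does not by itself produce such an $f$.

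What is needed is this. Suppose $Q\subseteq M[X]$ with $M\in w\mhyphen{\rm Max}(D)$. Then $QD_M[X]$ is a nonzero prime strictly inside $MD_M[X]$; the containment is strict because $Q\cap D=(0)\neq M$. On the other hand, $D_M[X]$ is Noetherian and ${\rm ht}(M)=1$ by hypothesis, so ${\rm ht}(MD_M[X])={\rm ht}(MD_M)=1$, a contradiction. Hence $Q$ lies in no $M[X]$. Since $Q$ is a height-one prime, it is a $w$-ideal. By \cite[Proposition 2.2]{FGH 1998}, any maximal $w$-ideal containing $Q$ is then an upper to zero, and hence equals $Q$.
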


From now on, we examine some examples of $w$-Hilbert domains
using $D+M$ constructions.

By Theorem \ref{w-Hil w-dim 1},
we obtain the following meaningful example,
which shows that a Hilbert domain is not necessarily a $w$-Hilbert domain in general.

\begin{example}\label{non w-Hil Hil}
{\rm
Consider the ring
$D:= \mathbb{R} + (X,Y)\mathbb{C}[X,Y]$.
Since $\mathbb{C}[X,Y]$ is a Krull domain,
$w\mhyphen\dim(\mathbb{C}[X,Y]) = 1$.
Hence $w\mhyphen\dim(D) = 2$ \cite[Theorem 8.7.9]{wang book}.
Also, note that $D$ is a Noetherian domain \cite[Theorem 4.12]{GH 1997}, and hence
it is a strong Mori domain.
By Theorem \ref{w-Hil w-dim 1},
$D$ is not a $w$-Hilbert domain.
On the other hand, $D$ is a Hilbert domain since $\mathbb{R}$ and $\mathbb{C}[X,Y]$ are Hilbert domains
\cite[Corollary 6]{ADF 1992}.
Consequently, $D$ is a Hilbert domain which is not a $w$-Hilbert domain.
}
\end{example}

By Examples \ref{w-Hilbert exam}(1) and \ref{non w-Hil Hil},
we obtain the fact that Hilbert domains and $w$-Hilbert domains do not imply each other.
Hence the converse of Proposition \ref{w-G-ideal is maximal} does not hold in general
since every Hilbert domain is a $w$-Hilbert domain when the converse of Proposition \ref{w-G-ideal is maximal} holds.

The next example shows that the characterization of
$w\mhyphen\dim(D)=1$ in terms of $w$-Hilbert domains does not hold
for a general integral domain $D$ that is not a strong Mori domain.

\begin{example}\label{w-Hil w-dim 2}
{\rm
Consider the ring $D: = \mathbb{Z} + X\mathbb{Q}[X]$.
Then $D$ is a DW-domain \cite[Theorem 8.7.17(3)]{wang book}.
Also, according to \cite[Corollary 3.6]{ADF 1992},
$D$ is a Hilbert domain.
This implies that $D$ is a $w$-Hilbert domain
by Example \ref{w-Hilbert exam}(3).
On the other hand, $\dim(D) = 2 = w\mhyphen\dim(D)$
\cite[Theorem 8.7.9]{wang book}.
Thus there exists a $w$-Hilbert domain whose
$w$-dimension is greater than $1$.
}
\end{example}

By Examples \ref{non w-Hil Hil} and \ref{w-Hil w-dim 2},
strong Mori domains and $w$-Hilbert domains do not imply each other.

As shown in Corollary \ref{w-Hilbert UMT},
a strong Mori domain is a $w$-Hilbert domain if and only if it is a UMT domain.
The following examples, which conclude this section,
demonstrate that the strong Mori domain assumption is indispensable.
In other words, without this hypothesis, a UMT domain and a $w$-Hilbert domain do not imply each other.
Furthermore, the following examples reveal that a $w$-Hilbert domain does not imply a Krull domain in general.

\begin{example}\label{UMT and w-Hil}
{\rm
(1) Note that every valuation domain is a UMT domain.
In Example \ref{w-Hilbert exam},
we verified that for $n \geq 2$,
a rank $n$ valuation domain is never a $w$-Hilbert domain.
Hence a UMT domain is not a $w$-Hilbert domain in general.

(2) Let $t$ be an indeterminate over $\mathbb{Q}$.
Consider the ring $R:= \mathbb{Z} + X \mathbb{Q}(t)\ps{X}$.
Since $\mathbb{Z}$ is integrally closed in $\mathbb{Q}(t)$,
$R$ is integrally closed \cite[Proposition 2.1]{AR 1988}.
However, $\mathbb{Q}(t)$ is not the quotient field of $\mathbb{Z}$,
which means that $R$ is not a P$v$MD \cite[Theorem 4.1]{AR 1988}.
Hence $R$ is not a UMT domain since a P$v$MD is exactly an integrally closed UMT domain
\cite[Example 7.8.2]{wang book}.
On the other hand, note that
$w\mhyphen {\rm Spec}(R) = \{M_p := p\mathbb{Z} + X\mathbb{Q}(t)\ps{X},
X\mathbb{Q}(t)\ps{X} \,|\, p \in \mathbb{P}\}$,
where $\mathbb{P}$ is the set of prime numbers.
Since $X\mathbb{Q}(t)\ps{X} = \bigcap_{p \in \mathbb{P}}M_p$,
$R$ is a $w$-Hilbert domain.
Hence a $w$-Hilbert domain is not a UMT domain in general.
In addition, since $w\mhyphen\dim(R) = 2$,
$R$ is not a Krull domain.
This implies that the converse of the last argument of Example \ref{w-Hilbert exam}(2)
does not hold in general.
}
\end{example}

\section{\texorpdfstring{$w$}{w}-Hilbert domains arising from the polynomial ring and the Anderson ring} \label{sec 3}

In this section, we investigate the conditions
under which the polynomial ring and the Anderson ring become $w$-Hilbert domains.
Let $A = \{f \in D[X] \,|\, f(0) = 1\}$.
Then $A$ is a multiplicative subset of $D[X]$,
and hence we obtain the ring $D[X]_A$,
which is called the {\it Anderson ring} of $D$.

A well-known fact is that
$R[X]$ is a Hilbert ring if and only if $R$ is a Hilbert ring \cite[Theorem 31]{kaplansky book}.
Also, in 2024, Baek and Lim proved that
$R[X]_A$ is never a Hilbert ring \cite[Proposition 2.11]{BL 2024}.
Motivated by these results,
it is natural to ask when
the polynomial ring and the Anderson ring are $w$-Hilbert domains. 
To obtain the answer to this question,
we need the following lemma.

\begin{lemma}\label{inter distri}
Let $D$ be an integral domain and
let $\{I_{\alpha} \,|\, \alpha \in \Lambda\}$ be a set of proper ideals of $D$.
Then $(\bigcap_{\alpha \in \Lambda} I_{\alpha}) D[X]_A
= \bigcap_{\alpha \in \Lambda}I_{\alpha}D[X]_A$.
\end{lemma}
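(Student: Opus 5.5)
The inclusion $(\bigcap_{\alpha} I_{\alpha}) D[X]_A \subseteq \bigcap_{\alpha} I_{\alpha}D[X]_A$ is immediate, since $\bigcap_\alpha I_\alpha \subseteq I_\beta$ for each $\beta$. The work is in the reverse inclusion. I will first reduce to polynomials: every element of $D[X]_A$ has the form $f/g$ with $f \in D[X]$ and $g \in A$. Because $g$ is a unit in $D[X]_A$, the element $f/g$ lies in an extended ideal $JD[X]_A$ if and only if $f$ does. So it suffices to show that a polynomial $f\in D[X]$ with $f \in I_\alpha D[X]_A$ for every $\alpha$ satisfies $f \in (\bigcap_\alpha I_\alpha)D[X]_A$.

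The key step is to describe $ID[X]_A\cap D[X]$ for an ideal $I$ of $D$. I claim it equals $ID[X]$, that is, $A$ is ``$I$-saturating''. Suppose $gf = h$ with $g\in A$ and $h \in I[X]$. I will show by induction on the degree that every coefficient of $f$ lies in $I$. Write $g = 1 + b_1X + \cdots + b_mX^m$ and $f = a_0 + a_1X + \cdots$. Comparing coefficients of $X^k$ in $gf=h$ gives
\[
a_k + b_1 a_{k-1} + \cdots + b_m a_{k-m} \in I.
\]
Since $a_0,\dots,a_{k-1}\in I$ by induction, it follows that $a_k\in I$. This uses only that the constant term of $g$ is $1$. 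Thus $I D[X]_A \cap D[X] = I[X]$. The properness assumption on $I_\alpha$ is not even needed here; it only guarantees that the extensions are proper.

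With this, $f\in I_\alpha D[X]_A$ for all $\alpha$ forces $f\in I_\alpha[X]$ for all $\alpha$. Hence each coefficient of $f$ lies in $\bigcap_\alpha I_\alpha$, so $f\in (\bigcap_\alpha I_\alpha)[X]\subseteq(\bigcap_\alpha I_\alpha)D[X]_A$, which completes the proof.

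The only real obstacle is the coefficient-induction step. There one must be careful that $I D[X]_A$ consists exactly of elements $h/g$ with $h\in I[X]=ID[X]$ and $g\in A$. Membership $f\in ID[X]_A$ therefore means $f=h/g$ in the quotient field, so $gf = h$ holds in $D[X]$ because $D[X]$ is a domain.
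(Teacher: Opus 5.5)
Your proof is correct and follows essentially the same route as the paper: reduce to the numerator $f$, deduce $f\in I_\alpha D[X]$ for every $\alpha$, and then conclude that the coefficients of $f$ (i.e., $c(f)$) lie in $\bigcap_{\alpha}I_\alpha$. The one difference is that your coefficient induction, which uses that $g\in A$ has constant term $1$, actually proves the contraction $ID[X]_A\cap D[X]=ID[X]$; the paper uses this step without justification.
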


\begin{proof}
Let $\frac{f}{g} \in \bigcap_{\alpha \in \Lambda}I_{\alpha}D[X]_A$,
where $f \in D[X]$ and $g \in A$.
Then for any $\alpha \in \Lambda$,
$\frac{f}{g} \in I_{\alpha}D[X]_A$,
so $f \in I_{\alpha}D[X]$.
Let $c(f)$ be the ideal of $D$ generated by the coefficients of $f$.
Then $c(f) \subseteq I_{\alpha}$,
and hence $c(f) \subseteq \bigcap_{\alpha \in \Lambda}I_{\alpha}$.
Therefore $f \in c(f)D[X] \subseteq (\bigcap_{\alpha \in \Lambda}I_{\alpha})D[X]$.
Thus $\frac{f}{g} \in (\bigcap_{\alpha \in \Lambda}I_{\alpha})D[X]_A$.
The reverse containment is obvious.
\end{proof}

The following result is the main result of this section. 
It shows that if the polynomial ring and the Anderson ring are $w$-Hilbert domains,
then the structure of prime $w$-ideals becomes simpler.

\begin{theorem}\label{main of section 3}
Let $D$ be an integral domain.
Then the following assertions are equivalent.
\begin{itemize}
\item[(1)]
$D$ is a $w$-Hilbert domain, and
every prime $w$-ideal of $D[X]$ is either
the extension of a prime $w$-ideal of $D$ or
an upper to zero maximal $w$-ideal of $D[X]$.
\item[(2)]
$D$ is a $w$-Hilbert domain, and
every prime $w$-ideal of $D[X]_A$ is either
the extension of a prime $w$-ideal of $D$ or
the extension of an upper to zero maximal $w$-ideal of $D[X]$ disjoint from $A$.
\item[(3)]
$D[X]$ is a $w$-Hilbert domain.
\item[(4)]
$D[X]_A$ is a $w$-Hilbert domain.
\end{itemize}
\end{theorem}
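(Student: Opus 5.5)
I would prove the cycle of implications $(1)\Rightarrow(3)\Rightarrow(1)$ and $(2)\Leftrightarrow(4)$. Then I would link the two pairs through the standard correspondence between prime $w$-ideals of $D[X]$ and of $D[X]_A$. Throughout I will use two standard facts from $w$-theory.

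\textbf{Fact (a).} For a prime ideal $P$ of $D$, $PD[X]$ is a prime $w$-ideal of $D[X]$ if and only if $P$ is a prime $w$-ideal of $D$. Moreover, $PD[X]$ is a maximal $w$-ideal if and only if $P$ is.

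\textbf{Fact (b).} Every maximal $w$-ideal of $D[X]$ is either $MD[X]$ with $M\in w\mhyphen{\rm Max}(D)$, or an upper to zero.

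\textbf{Fact (c).} $w\mhyphen{\rm Max}(D[X]_A)$ consists exactly of the ideals $\mathfrak{M}D[X]_A$ with $\mathfrak{M}\in w\mhyphen{\rm Max}(D[X])$ disjoint from $A$. Since $A\subseteq 1+XD[X]$, every $MD[X]$ is disjoint from $A$. Also, $D[X]_A$ is a quotient ring of $D[X]$, so prime $w$-ideals of $D[X]_A$ contract to prime $w$-ideals of $D[X]$ disjoint from $A$.

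\textbf{$(1)\Rightarrow(3)$.} Let $Q$ be a prime $w$-ideal of $D[X]$. If $Q$ is an upper to zero that is a maximal $w$-ideal, there is nothing to show. Otherwise $Q=PD[X]$ with $P$ a prime $w$-ideal of $D$. Write $P=\bigcap M_\alpha$ with $M_\alpha\in w\mhyphen{\rm Max}(D)$. Then
\[
PD[X]=\Big(\bigcap M_\alpha\Big)D[X]=\bigcap M_\alpha D[X],
\]
by the content argument of Lemma \ref{inter distri}, applied in $D[X]$ before localizing. Each $M_\alpha D[X]$ is a maximal $w$-ideal of $D[X]$, which proves $(3)$.

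\textbf{$(3)\Rightarrow(1)$.} This is where I expect the main obstacle. To see that $D$ is $w$-Hilbert, take a prime $w$-ideal $P$ of $D$ and write $PD[X]=\bigcap \mathfrak{M}_\alpha$. Intersecting with $D$ kills the uppers to zero, since they contract to $(0)$, and gives $P=\bigcap (\mathfrak{M}_\alpha\cap D)$ taken over those $\alpha$ with $\mathfrak{M}_\alpha=M_\alpha D[X]$. If there are no such $\alpha$, then $P=(0)$ and the statement is trivial, $(0)$ being the empty intersection or handled directly.

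For the structure claim, let $Q$ be a prime $w$-ideal of $D[X]$ and set $P=Q\cap D$, which is a prime $w$-ideal of $D$.
\begin{itemize}
\item If $P=0$ and $Q\neq 0$, then $Q$ is an upper to zero. Any maximal $w$-ideal containing $Q$ must itself be an upper to zero, since $MD[X]\supseteq Q$ would force the content of a polynomial in $Q$ into $M$, and a Gauss/content argument should rule this out by localizing. Writing $Q$ as an intersection of such uppers then forces $Q$ to equal a maximal one, because distinct uppers to zero are incomparable.
\item If $P\neq0$, I must show $Q=PD[X]$. Suppose $Q\supsetneq PD[X]$. Then the maximal $w$-ideals containing $Q$ are of the form $MD[X]$ with $M\supseteq P$; uppers are excluded because they contract to $0$. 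Thus
\[
Q=\bigcap M_\alpha D[X]=\Big(\bigcap M_\alpha\Big)D[X]
\]
is extended, so $Q=(Q\cap D)D[X]=PD[X]$, a contradiction.
\end{itemize}
This intersection-of-extended-ideals trick is the key point in both cases.

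\textbf{$(2)\Leftrightarrow(4)$ and the link to $(1)$/$(3)$.} I would run the identical argument in $D[X]_A$, using Fact (c) and Lemma \ref{inter distri} in place of its polynomial version. Each step transfers because prime $w$-ideals of $D[X]_A$ are of the form $QD[X]_A$ with $Q$ disjoint from $A$, and contraction is bijective on such primes. Finally, $(1)\Leftrightarrow(2)$ follows from this correspondence: the extended primes $PD[X]$ always avoid $A$, and uppers to zero avoiding $A$ correspond exactly to those in the description in $(2)$. The remaining uppers to zero meeting $A$ are themselves maximal $w$-ideals in case $(1)$, so they cause no trouble.
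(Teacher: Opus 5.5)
Your outline is sound, but the two steps that carry the real content are not correctly justified.

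\textbf{First gap: the case $Q\cap D=(0)$ in $(3)\Rightarrow(1)$.} You claim, without using (3), that no $MD[X]$ with $M$ a maximal $w$-ideal can contain an upper to zero, ``by a Gauss/content argument''. This is false in general. Every upper to zero has height one, hence is a $w$-ideal. So by your Fact (b), an upper to zero is a maximal $w$-ideal exactly when it lies in no $MD[X]$. Your claim is therefore equivalent to $D$ being a UMT domain. That fails, for instance, for the strong Mori domain of Example \ref{non w-Hil Hil}, which is not UMT by Corollary \ref{w-Hilbert UMT}. The fact that uppers avoid every $MD[X]$ is precisely what must be extracted from hypothesis (3), so no content argument can supply it.

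The repair uses your own intersection trick. Write $Q=\bigcap_\alpha\mathfrak{M}_\alpha$ with maximal $w$-ideals $\mathfrak{M}_\alpha$. If every $\mathfrak{M}_\alpha$ were of the form $M_\alpha D[X]$, then $Q=(\bigcap_\alpha M_\alpha)D[X]$ would be extended, so $Q=(Q\cap D)D[X]=(0)$, which is absurd. Hence some $\mathfrak{M}_\alpha$ is an upper to zero containing $Q$, and it equals $Q$ because uppers to zero are incomparable. Your ``identical argument'' for $(4)\Rightarrow(2)$ inherits the same flaw and needs the same fix. A smaller slip: intersecting with $D$ does not ``kill'' uppers in a decomposition of $PD[X]$; an upper there would make the intersection $(0)$. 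The correct reason is that no upper contains $PD[X]$ when $P\neq(0)$.

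\textbf{Second gap: the link $(2)\Rightarrow(1)$.} This is where the paper does its main work, and in your proposal it is only asserted. You need two facts:
\begin{itemize}
\item[(a)] every prime $w$-ideal $\mathfrak{q}$ of $D[X]$ disjoint from $A$ extends to a prime $w$-ideal of $D[X]_A$;
\item[(b)] every prime $w$-ideal of $D[X]$ meeting $A$ is an upper to zero that is a maximal $w$-ideal.
\end{itemize}

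For (a), ``contraction is bijective'' is not automatic, because extensions of $w$-ideals to quotient rings need not be $w$-ideals. It does hold here. If $\mathfrak{q}\cap D\neq(0)$, then $\mathfrak{q}\subseteq MD[X]$ for some maximal $w$-ideal $M$ of $D$. So $\mathfrak{q}D[X]_A$ lies in the maximal $w$-ideal $MD[X]_A$ of Fact (c), and a prime contained in a proper $w$-ideal is a $w$-ideal. If $\mathfrak{q}\cap D=(0)$, then $\mathfrak{q}$ is an upper and extends to a height-one prime, which is a $w$-ideal.

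For (b), you say it holds ``in case (1)'', which is circular in this direction. You also tacitly assume that the only non-extended prime $w$-ideals are uppers to zero. That is not automatic for $w$-ideals, since every prime ideal contained in some $MD[X]$ is a $w$-ideal. The missing argument is short. A maximal $w$-ideal containing $\mathfrak{q}$ meets $A$, so it cannot be any $MD[X]$, because elements of $A$ have content $D$. Hence it is an upper $\mathfrak{U}$, which forces $\mathfrak{q}\cap D=(0)$ and $\mathfrak{q}=\mathfrak{U}$.

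With these repairs your route works: a direct $(1)\Leftrightarrow(3)$ and $(2)\Leftrightarrow(4)$, glued by $(1)\Leftrightarrow(2)$. It is a valid alternative to the paper's cycle $(1)\Rightarrow(3)\Rightarrow(4)\Rightarrow(2)\Rightarrow(1)$. Your Fact (b) argument for (b) is also shorter than the paper's treatment of its cases (ii) and (iv).
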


\begin{proof}
(1) $\Rightarrow$ (3)
Suppose that the assertion (1) holds.
Let $\mathfrak{p}$ be a prime $w$-ideal of $D[X]$.
If $\mathfrak{p}$ is an upper to zero maximal $w$-ideal of $D[X]$, we are done.
Now, suppose that $\mathfrak{p} = PD[X]$
for some prime $w$-ideal $P$ of $D$.
Since $D$ is a $w$-Hilbert domain,
there exists a subset $\{M_{\alpha} \,|\, \alpha \in \Lambda\}$ of
$w\mhyphen{\rm Max}(D)$ such that
$P = \bigcap_{\alpha \in \Lambda}M_{\alpha}$.
Hence $\mathfrak{p} = PD[X] = \big(\bigcap_{\alpha \in \Lambda}M_{\alpha}\big)D[X]
= \bigcap_{\alpha \in \Lambda} M_{\alpha}D[X]$.
Thus $D[X]$ is a $w$-Hilbert domain since
$\{M_{\alpha}D[X] \,|\, \alpha \in \Lambda\}$ is
a set of $w\mhyphen{\rm Max}(D[X])$
\cite[Proposition 2.2]{FGH 1998}.

(3) $\Rightarrow$ (4)
Suppose that $D[X]$ is a $w$-Hilbert domain and
let $\mathfrak{p}$ be a prime $w$-ideal of $D[X]_A$.
Then there exists a prime $w$-ideal $\mathfrak{q}$ of $D[X]$ disjoint from $A$ such that
$\mathfrak{p} = \mathfrak{q}D[X]_A$.
If $\mathfrak{q}$ is a maximal $w$-ideal of $D[X]$,
then $\mathfrak{p}$ is a maximal $w$-ideal of $D[X]_A$ \cite[Theorem 4.5]{BL 2024},
and hence we are done.
Now, assume that $\mathfrak{q}$ is not a maximal $w$-ideal of $D[X]$.
Then there exists a subset $\{\mathfrak{m}_{\alpha} \,|\, \alpha \in \Lambda\}$
of $w \mhyphen {\rm Max}(D[X])$ such that
$\mathfrak{q} = \bigcap_{\alpha \in \Lambda} \mathfrak{m}_{\alpha}$.
It is obvious that $\mathfrak{m}_{\alpha}$ is not an upper to zero in $D[X]$ for any $\alpha \in \Lambda$,
which means that for each $\alpha \in \Lambda$,
there exists $M_{\alpha} \in w \mhyphen {\rm Max}(D)$ such that
$\mathfrak{m}_{\alpha} = M_{\alpha}D[X]$
\cite[Proposition 2.2]{FGH 1998}.
This implies that $\mathfrak{q} = \bigcap_{\alpha \in \Lambda} M_{\alpha}D[X]
= (\bigcap_{\alpha \in \Lambda} M_{\alpha}) D[X]$.
By Lemma \ref{inter distri}, we obtain
\begin{center}
$\mathfrak{p} = \mathfrak{q}D[X]_A = (\bigcap_{\alpha \in \Lambda} M_{\alpha})D[X]_A
= \bigcap_{\alpha \in \Lambda} M_{\alpha}D[X]_A.$
\end{center}
According to \cite[Theorem 4.5]{BL 2024},
$\{M_{\alpha}D[X]_A \,|\, \alpha \in \Lambda\}$ is
a subset of $w\mhyphen{\rm Max}(D[X]_A)$.
Thus $D[X]_A$ is a $w$-Hilbert domain.

(4) $\Rightarrow$ (2)
Suppose that $D[X]_A$ is a $w$-Hilbert domain and
let $P \in w\mhyphen{\rm Spec}(D)$.
Then $PD[X]_A$ is a prime $w$-ideal of $D[X]_A$
\cite[Corollary 4.3]{BL 2024},
so there exists a subset $\{\mathfrak{m}_{\alpha} \,|\, \alpha \in \Lambda\}$ of $w\mhyphen{\rm Max}(D[X]_A)$ such that
$PD[X]_A = \bigcap_{\alpha \in \Lambda} \mathfrak{m}_{\alpha}$.
Since $P \neq (0)$,
$\mathfrak{m}_{\alpha}$ is not the extension of an upper to zero in $D[X]$
for any $\alpha \in \Lambda$,
which means that
for each $\alpha \in \Lambda$,
there exists a maximal $w$-ideal $M_{\alpha}$ of $D$
such that $\mathfrak{m}_{\alpha} = M_{\alpha}D[X]_A$
\cite[Theorem 4.5]{BL 2024}.
Hence $PD[X]_A = \bigcap_{\alpha \in \Lambda}M_{\alpha}D[X]_A = (\bigcap_{\alpha \in \Lambda}M_{\alpha})D[X]_A$ by Lemma \ref{inter distri}.
According to \cite[Lemma 4.1]{BL 2024},
we obtain $P = \bigcap_{\alpha \in \Lambda} M_{\alpha}$.
Thus $D$ is a $w$-Hilbert domain.
For the remaining argument,
let $\mathfrak{p}$ be a prime $w$-ideal of $D[X]_A$.
Then there exists a subset $\{\mathfrak{m}_{\alpha} \,|\, \alpha \in \Lambda\}$ of $w\mhyphen{\rm Max}(D[X]_A)$ such that
$\mathfrak{p} = \bigcap_{\alpha \in \Lambda} \mathfrak{m}_{\alpha}$.
First, assume that for each $\alpha \in \Lambda$,
there exists a maximal $w$-ideal $M_{\alpha}$ of $D$ such that
$\mathfrak{m}_{\alpha} = M_{\alpha}D[X]_A$.
Then $\mathfrak{p} = \bigcap_{\alpha \in \Lambda} M_{\alpha}D[X]_A = (\bigcap_{\alpha \in \Lambda} M_{\alpha})D[X]_A$ by Lemma \ref{inter distri}.
This implies that $\mathfrak{p}$ is the extension of a prime $w$-ideal of $D$.
Now, assume that there exists $\alpha \in \Lambda$ such that
$\mathfrak{m}_{\alpha} \cap D = (0)$.
Then $\mathfrak{p} \cap D = (0)$,
which means that $\mathfrak{p} = \mathfrak{m}_{\alpha}$.
Note that there exists an upper to zero maximal $w$-ideal $\mathfrak{p}_{\alpha}$ of $D[X]$ such that
$\mathfrak{m}_{\alpha} = \mathfrak{p}_{\alpha}D[X]_A$
\cite[Theorem 4.5]{BL 2024},
so $\mathfrak{p}$ is the extension of an upper to zero maximal $w$-ideal of $D[X]$.

(2) $\Rightarrow$ (1)
Suppose that the assertion (2) holds.
Let $\mathfrak{p}$ be a prime $w$-ideal of $D[X]$.
Then $\mathfrak{p}$ satisfies one of the following conditions:
\begin{itemize}
\item[(i)]
$\mathfrak{p}$ is an upper to zero maximal $w$-ideal,
\item[(ii)]
$\mathfrak{p}$ is an upper to zero which is not a maximal $w$-ideal,
\item[(iii)]
$\mathfrak{p} \cap D \neq (0)$ and $\mathfrak{p} \cap A = \emptyset$,
\item[(iv)]
$\mathfrak{p} \cap D \neq (0)$ and $\mathfrak{p} \cap A \neq \emptyset$.
\end{itemize}
If $\mathfrak{p}$ satisfies the condition (i),
then we are done.
If $\mathfrak{p}$ satisfies the condition (ii),
then for any $f \in \mathfrak{p}$, $c(f)_w \subsetneq D$ \cite[Theorem 1.4]{HZ 1989}.
Hence $c(f) \subsetneq D$, which means that $\mathfrak{p}$ is disjoint from $A$.
Since $\mathfrak{p} D[X]_A$ is a height-one prime ideal of $D[X]_A$,
it is a prime $w$-ideal of $D[X]_A$.
This contradicts our assumption.
Hence $\mathfrak{p}$ cannot satisfy the condition (ii).
Now, assume that $\mathfrak{p}$ satisfies the condition (iii).
Then $\mathfrak{p}D[X]_A$ is a prime $w$-ideal, and
it cannot be expressed as the extension of an upper to zero maximal $w$-ideal of $D[X]$ since $\mathfrak{p} \cap D \neq (0)$.
Hence there exists a prime $w$-ideal $P$ of $D$ such that
$\mathfrak{p}D[X]_A = PD[X]_A$,
so $\mathfrak{p} = PD[X]$.
Hence $\mathfrak{p}$ is the extension of a prime $w$-ideal of $D$.
Lastly, suppose that $\mathfrak{p}$ satisfies the condition (iv).
Then $(\mathfrak{p} \cap D)D[X] \subsetneq \mathfrak{p}$,
so there exists an upper to zero $\mathfrak{q}$ in $D[X]$ such that $\mathfrak{q} \subsetneq \mathfrak{p}$ \cite[Theorem 7.3.25]{wang book}.
This implies that $\mathfrak{q}$ is not a maximal $w$-ideal.
Hence $\mathfrak{q}$ satisfies the condition (ii), a contradiction.
Therefore $\mathfrak{p}$ cannot satisfy the condition (iv).
Thus $D[X]$ is a $w$-Hilbert domain.
\end{proof}

Theorem \ref{main of section 3} provides a framework to construct $w$-Hilbert domains that are
neither Hilbert domains, DW-domains, nor DVRs.

\begin{example}\label{w-Hilbert nor DVR}
{\rm
Let $V$ be a DVR with maximal ideal $M$.
Then $V$ is a $w$-Hilbert domain.
Also, note that $w\mhyphen\dim(V[X])=1$,
which means that every prime $w$-ideal of $V[X]$ is a maximal $w$-ideal of $V[X]$.
Hence every prime $w$-ideal of $V[X]$ is either the extension of a prime $w$-ideal of $V$
or an upper to zero maximal $w$-ideal of $V[X]$.
By Theorem \ref{main of section 3},
$V[X]_A$ is a $w$-Hilbert domain.
On the other hand, $V[X]_A$ is neither a Hilbert domain, a DW-domain nor a valuation domain
\cite[Proposition 2.12, Corollary 3.7 and Theorem 4.5]{BL 2024}.
}
\end{example}

According to Theorem \ref{main of section 3},
we can obtain several immediate consequences.
The following result follows from Proposition \ref{infinite maximal $w$-ideal} and Theorem \ref{main of section 3}.

\begin{corollary}\label{infinite maximal $w$ poly}
Let $D$ be an integral domain.
Suppose that $D[X]$ is a $w$-Hilbert domain.
If one of $D[X]$ or $D[X]_A$ has a prime $w$-ideal which is not a maximal $w$-ideal,
then $D$ has an infinite number of maximal $w$-ideals.
\end{corollary}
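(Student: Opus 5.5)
By Theorem \ref{main of section 3}, the hypothesis that $D[X]$ is a $w$-Hilbert domain gives two facts at once: $D[X]_A$ is also a $w$-Hilbert domain, and $D$ is a $w$-Hilbert domain. Moreover, every prime $w$-ideal of $D[X]$ has one of two forms. It is either $PD[X]$ for a prime $w$-ideal $P$ of $D$, or an upper to zero that is a maximal $w$-ideal. The analogous dichotomy (2) holds in $D[X]_A$. The plan is to reduce to a prime $w$-ideal of $D$ that is not maximal, and then apply Proposition \ref{infinite maximal $w$-ideal} to $D$.

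\textbf{Case $D[X]$.} Let $\mathfrak{p}$ be a prime $w$-ideal of $D[X]$ that is not a maximal $w$-ideal. By the dichotomy, $\mathfrak{p}$ is not an upper to zero maximal $w$-ideal, so $\mathfrak{p}=PD[X]$ with $P\in w\mhyphen{\rm Spec}(D)$. We claim $P$ is not a maximal $w$-ideal of $D$. Otherwise $PD[X]$ would be a maximal $w$-ideal of $D[X]$ by \cite[Proposition 2.2]{FGH 1998}, contradicting the choice of $\mathfrak{p}$. Hence $D$ is a $w$-Hilbert domain possessing a prime $w$-ideal that is not maximal. Proposition \ref{infinite maximal $w$-ideal} then gives infinitely many maximal $w$-ideals of $D$.

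\textbf{Case $D[X]_A$.} Let $\mathfrak{p}$ be a prime $w$-ideal of $D[X]_A$ that is not a maximal $w$-ideal. By (2), either $\mathfrak{p}=PD[X]_A$ with $P\in w\mhyphen{\rm Spec}(D)$, or $\mathfrak{p}=\mathfrak{q}D[X]_A$ with $\mathfrak{q}$ an upper to zero maximal $w$-ideal of $D[X]$ disjoint from $A$.
\begin{itemize}
\item In the second alternative, \cite[Theorem 4.5]{BL 2024} shows $\mathfrak{p}$ is a maximal $w$-ideal of $D[X]_A$, which is excluded.
\item So $\mathfrak{p}=PD[X]_A$. If $P$ were a maximal $w$-ideal of $D$, then $PD[X]_A$ would be a maximal $w$-ideal of $D[X]_A$, again by \cite[Theorem 4.5]{BL 2024}, also excluded.
\end{itemize}
Thus $P$ is a non-maximal prime $w$-ideal of $D$, and we conclude exactly as in the first case.

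The only delicate point is transferring maximality. We need that $M\mapsto MD[X]$ and $M\mapsto MD[X]_A$ send maximal $w$-ideals to maximal $w$-ideals, and that uppers to zero which are maximal $w$-ideals extend to maximal $w$-ideals of $D[X]_A$. Both facts are supplied by the cited results \cite[Proposition 2.2]{FGH 1998} and \cite[Theorem 4.5]{BL 2024}, which the paper already used in the proof of Theorem \ref{main of section 3}.
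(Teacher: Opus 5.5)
Your proposal is correct and follows the paper's proof. You use Theorem~\ref{main of section 3} to write the non-maximal prime $w$-ideal as $PD[X]$ (resp.\ $PD[X]_A$), you see that $P$ cannot be a maximal $w$-ideal of the $w$-Hilbert domain $D$ because maximal $w$-ideals extend to maximal $w$-ideals, and you conclude with Proposition~\ref{infinite maximal $w$-ideal}. Your direct contrapositive via the extension of a maximal $P$ is only a cosmetic variant of the paper's remark about the maximal $w$-ideals containing $\mathfrak{p}$, and you write out the $D[X]_A$ case that the paper leaves implicit.
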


\begin{proof}
Let $D[X]$ be a $w$-Hilbert domain.
Then every prime $w$-ideal of $D[X]$ is either
the extension of a prime $w$-ideal of $D$ or
an upper to zero maximal $w$-ideal of $D[X]$
by Theorem \ref{main of section 3}.
Hence if $D[X]$ has a prime $w$-ideal $\mathfrak{p}$
which is not a maximal $w$-ideal,
then $\mathfrak{p} = PD[X]$ for some prime $w$-ideal $P$ of $D$.
Since every maximal $w$-ideal of $D[X]$ containing $\mathfrak{p}$ is the extension of a maximal $w$-ideal of $D$,
$P$ is not a maximal $w$-ideal of $D$.
Thus $D$ has an infinite number of maximal $w$-ideals
by Proposition \ref{infinite maximal $w$-ideal}.
According to \cite[Theorem 4.5]{BL 2024} and
Theorem \ref{main of section 3},
the argument holds for $D[X]_A$.
\end{proof}

The next example shows that the converse of Corollary \ref{infinite maximal $w$ poly} does not hold in general.

\begin{example}
{\rm
Consider the ring $\mathbb{Z}$.
Since $\mathbb{Z}$ is a Krull domain,
$\mathbb{Z}[X]$ is also a Krull domain,
and hence
$w\mhyphen\dim(\mathbb{Z}[X]) = 1$.
This implies that $\mathbb{Z}[X]$ is a $w$-Hilbert domain and
$w\mhyphen{\rm Max}(\mathbb{Z}[X]) = w\mhyphen{\rm Spec}(\mathbb{Z}[X])$,
which means that $w\mhyphen{\rm Max}(\mathbb{Z}[X]_A) = w\mhyphen{\rm Spec}(\mathbb{Z}[X]_A)$.
On the other hand, we already know that $\mathbb{Z}$ has an infinite number of maximal $w$-ideals.
Thus the converse of Corollary \ref{infinite maximal $w$ poly} does not hold in general.
}
\end{example}

At the end of this paper,
we compare the $w$-dimensions of the polynomial ring and its base ring.

\begin{corollary}\label{w-dim over w-Hilbert}
Let $D$ be an integral domain.
Suppose that $D[X]$ is a $w$-Hilbert domain.
Then $w\mhyphen \dim(D) = w\mhyphen \dim(D[X])$.
\end{corollary}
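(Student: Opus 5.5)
By Theorem \ref{main of section 3}, the hypothesis that $D[X]$ is a $w$-Hilbert domain gives a concrete description of $w\mhyphen{\rm Spec}(D[X])$. Every prime $w$-ideal of $D[X]$ is either $PD[X]$ for some $P\in w\mhyphen{\rm Spec}(D)$, or an upper to zero that is a maximal $w$-ideal. I plan to compare heights of maximal $w$-ideals on both sides using this dichotomy, together with \cite[Proposition 2.2]{FGH 1998}: the maximal $w$-ideals of $D[X]$ are exactly the $MD[X]$ with $M\in w\mhyphen{\rm Max}(D)$ and the uppers to zero that are maximal $w$-ideals.

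\emph{Inequality $w\mhyphen\dim(D)\le w\mhyphen\dim(D[X])$.} Let $M\in w\mhyphen{\rm Max}(D)$ and take a chain $P_0=(0)\subsetneq P_1\subsetneq\cdots\subsetneq P_n=M$ of primes of $D$. Each $P_i$ lies in the proper $w$-ideal $M$, so by the remark in the introduction each $P_i$ is a prime $w$-ideal. Extending gives a chain $P_0D[X]\subsetneq\cdots\subsetneq P_nD[X]=MD[X]$, which is strict because $P_iD[X]\cap D=P_i$. Since $MD[X]$ is a maximal $w$-ideal, we get ${\rm ht}(M)\le{\rm ht}(MD[X])\le w\mhyphen\dim(D[X])$. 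This direction holds without the $w$-Hilbert hypothesis.

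\emph{Reverse inequality.} Let $\mathfrak{m}\in w\mhyphen{\rm Max}(D[X])$.

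If $\mathfrak{m}$ is an upper to zero, then it has height one. If $D$ is not a field, then $w\mhyphen\dim(D)\ge1$, so ${\rm ht}(\mathfrak m)\le w\mhyphen\dim(D)$. In the field case both $w$-dimensions must be interpreted by convention, and I would dispose of it separately.

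Otherwise $\mathfrak{m}=MD[X]$ with $M\in w\mhyphen{\rm Max}(D)$. Take a chain of primes $\mathfrak q_0=(0)\subsetneq\mathfrak q_1\subsetneq\cdots\subsetneq\mathfrak q_n=\mathfrak m$. Each $\mathfrak q_i$ is contained in the proper $w$-ideal $\mathfrak m$, so it is a prime $w$-ideal. Hence, by Theorem \ref{main of section 3}(1), each $\mathfrak q_i$ with $i\ge1$ is either some $P_iD[X]$ or an upper to zero that is a maximal $w$-ideal.

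An upper to zero that is a maximal $w$-ideal cannot appear strictly below $\mathfrak m$, because it would then be properly contained in the proper $w$-ideal $\mathfrak m$, contradicting maximality. So every $\mathfrak q_i$ with $i\ge 1$ has the form $P_iD[X]$, where $P_i=\mathfrak q_i\cap D$ is a prime $w$-ideal. Contracting then gives a strict chain $(0)\subsetneq P_1\subsetneq\cdots\subsetneq P_n=M$ in $D$. The chain is strict because $P_iD[X]\ne P_{i+1}D[X]$ forces $P_i\ne P_{i+1}$. Therefore ${\rm ht}(\mathfrak m)\le{\rm ht}(M)\le w\mhyphen\dim(D)$.

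Taking suprema over all maximal $w$-ideals gives the equality. The main point needing care is this last step: ensuring that no intermediate prime in a chain below $MD[X]$ is an upper to zero. This is exactly where the structure given by Theorem \ref{main of section 3} is used. The rest is routine bookkeeping, plus the degenerate case where $D$ is a field, in which both dimensions are trivial.
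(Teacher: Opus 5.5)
Your argument is correct and is essentially the paper's: both use Theorem \ref{main of section 3} to show that every nonzero prime $w$-ideal strictly below a maximal $w$-ideal $MD[X]$ is an extension $PD[X]$, and then contract the chain to $D$. You argue directly on heights rather than by contradiction, and you also check the easy inequality $w\mhyphen\dim(D)\le w\mhyphen\dim(D[X])$, which the paper leaves implicit.

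One slip: the field case cannot be disposed of. For a field $K$, $K[X]$ is a PID with $w\mhyphen\dim(K[X])=1$, while $w\mhyphen\dim(K)=0$. So the statement tacitly assumes $D$ is not a field, and the paper's proof makes the same assumption, since it needs $n\ge 1$.
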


\begin{proof}
Let $w\mhyphen \dim(D) = n$.
Suppose to the contrary that $w \mhyphen \dim(D[X]) \geq n+1$.
Then there exists a chain
$\mathfrak{p}_1 \supsetneq \mathfrak{p}_2 \supsetneq \cdots \supsetneq \mathfrak{p}_{n+1}$ of prime $w$-ideals of $D[X]$.
Note that for any $1 \leq i \leq n+1$,
$\mathfrak{p}_i$ is not an upper to zero maximal $w$-ideal of $D[X]$.
This implies that for each $1 \leq i \leq n+1$,
there exists a prime $w$-ideal $P_i$ of $D$ such that
$\mathfrak{p}_i = P_iD[X]$ by Theorem \ref{main of section 3}.
Hence we obtain the chain $P_1 \supsetneq P_2 \supsetneq \cdots \supsetneq P_{n+1}$ of prime $w$-ideals of $D$.
This contradicts the fact that $w \mhyphen \dim(D) = n$.
Thus $w \mhyphen \dim(D[X]) = n = w \mhyphen \dim (D)$.
\end{proof}

\end{document}